\newcommand{\RNum}[1]{\uppercase\expandafter{\romannumeral #1\relax}}
\pgfplotsset{compat=1.17}
\numberwithin{equation}{section}
\titleformat{\section}[runin]{\bfseries}{\thesection.}{3pt}{}[.]
\begin{document}

\title[Detecting adversarial attacks on random samples]%
{Detecting adversarial attacks on random samples}

%\author{Vsevolod Shevchishin}
%\address{Faculty of Mathematics and Computer Science\\
%University of Warmia and Mazury\\
%ul.~Słoneczna 54, 10-710 Olsztyn, Poland
%}
%\curraddr{}
%\email{vsevolod@matman.uwm.edu.pl, shevchishin@gmail.com}
%\thanks{The work was supported by National Science Centre, Poland, project number: 2019/35/B/ST1/03573.}

\author{Gleb Smirnov}
\address{School of Mathematics, 
University of Geneva, Rue du Conseil-G{\'e}n{\'e}ral, 7, 1205, Geneva, Switzerland}

%\qurraddr{}
\email{gleb.smirnov@unige.ch}
%\thanks{Gleb's akcnoledgments}

%    \subjclass is required.
%\subjclass[2010]{Primary }

%\date{}

%\dedicatory{}

\begin{abstract}
This paper studies the problem of detecting adversarial perturbations in a sequence of observations. Given a data sample $X_1, \ldots, X_n$ drawn from a standard normal distribution, an adversary, after observing the sample, can perturb each observation by a fixed magnitude or leave it unchanged. We explore the relationship between the perturbation magnitude, the sparsity of the perturbation, and the detectability of the adversary's actions, establishing precise thresholds for when detection becomes impossible.
\end{abstract}

\maketitle

\setcounter{section}{0}

\section{Introduction}\label{intro}

We consider the problem of testing a simple hypothesis against a complex alternative. Let $\boldsymbol{X} = (X_1, \ldots, X_n)$ be a data sample drawn from the standard normal distribution, $N(0,1)$. An adversary, after observing $\boldsymbol{X}$, can either attack or leave the data unchanged based on a signal from a third party. If an attack occurs, the adversary adds a vector $\boldsymbol{\theta} = (\theta_1, \ldots, \theta_n)$ from a subset $\Theta \subset \mathbb{R}^n$ to $\boldsymbol{X}$. The choice of $\boldsymbol{\theta}$ may depend on the original data $\boldsymbol{X}$, unlike the typical hypothesis testing scenarios. The perturbed observation is denoted by $\boldsymbol{X}'$. We define the hypotheses as:
\[
H_0: \boldsymbol{X}' = \boldsymbol{X} \quad \text{(no attack)}
\]
\[
H_1: \boldsymbol{X}' = \boldsymbol{X} + \boldsymbol{\theta}(\boldsymbol{X}) \quad \text{(attack)}.
\]
The classifier must decide, based on $\boldsymbol{X}'$ and the set of possible perturbations $\Theta$, whether an attack occurred ($H_1$) or not ($H_0$). The test accepts $H_0$ if $\boldsymbol{X}'$ lies within a Borel subset $\Omega \subset \mathbb{R}^n$:
\[
\text{Accept } H_0 \text{ iff } \boldsymbol{X}' \in \Omega.
\]
The objective is to minimize the false negative rate,
\[
P(\text{accept } H_0 \mid H_1),
\]
while keeping the false alarm rate below a threshold $\alpha$ (e.g., 0.05):
\[
P(\text{accept } H_1 \mid H_0) \leq \alpha.
\]
The adversary has full access to the classifier's test 
(white-box attack), in principle allowing them to craft a highly effective perturbation. However, in the scenario considered here, this access is essentially useless — the adversary gains little to no advantage from knowing the test. The classifier knows the set $\Theta$ but not the adversary's strategy for choosing $\boldsymbol{\theta}$. However, in our example, revealing the adversary's strategy does not benefit the classifier. 
\smallskip%

For the set of potential attacks $\Theta \subset \mathbb{R}^n$ (again, known to both the classifier and the adversary), some regularity conditions are necessary to ensure that related sets are Borel measurable. In this paper, we consider two examples of $\Theta$: an $(n-1)$-dimensional sphere in $\mathbb{R}^n$ and a finite set of points in $\mathbb{R}^n$. Both cases satisfy the required regularity.
\smallskip%

The problem can also be framed geometrically. Given a Borel set $\Omega \subset \mathbb{R}^n$, we translate $\Omega$ by $-\boldsymbol{\theta}$ for each $\boldsymbol{\theta} \in \Theta$ and take the union of these translations:
\[
\Omega - \Theta = \left\{ \boldsymbol{x} \in \mathbb{R}^n \mid \boldsymbol{x} + \boldsymbol{\theta} \in \Omega \quad \text{for some } \boldsymbol{\theta} \in \Theta \right\}.
\]
Here, $-\Theta$ denotes the set obtained by applying the antipodal map to $\Theta$. Thus, $\Omega - \Theta$ is the Minkowski sum of $\Omega$ and $-\Theta$, hence the notation.
\smallskip%

Let $\boldsymbol{X} = (X_1, \ldots, X_n)$ be a random vector in $\mathbb{R}^n$ with iid components, each $N(0,1)$, and let $\gamma$ be the corresponding Gaussian measure on $\mathbb{R}^n$. The task is to bound the Gaussian volume of $\Omega - \Theta$ in terms of that of $\Omega$. Specifically, we have:
\[
\gamma(\Omega) = 1 - P\left( \text{accept } H_1 \mid H_0 \right), \quad 
\gamma(\Omega - \Theta) = P\left( \text{accept } H_0 \mid H_1 \right).
\]
We now describe the specific scenario considered in this paper. Fix a number $a > 0$, for now constant, independent of $n$. For each $i = 1, \ldots, n$, let
\[
\theta_i \in \{ \pm a, 0 \},
\]
meaning the adversary can perturb each coordinate $X_i$ by $\pm a$ or leave it unchanged. The strength of the adversarial perturbation is measured using the sparsity ratio of $\boldsymbol{\theta}$, defined as:
\[
\text{\normalfont{SR}}(\boldsymbol{\theta}) = \frac{\#\{ i : \theta_i = 0 \}}{n}.
\]
Intuitively, attacks with a very high sparsity ratio should be nearly impossible to detect. To ensure a meaningful problem setup, we should impose an upper bound on the perturbation strength. Conversely, when many data points are altered, the attack becomes easier for the classifier to detect. We seek to determine the threshold for the sparsity ratio above which the adversarial attack becomes undetectable by the classifier.
\smallskip%

\begin{center}
\begin{tikzpicture}
    \begin{axis}[
        width=0.4\textwidth, % Adjusted size to make it slightly smaller
        height=0.3\textwidth,
        xlabel={$a$},
        ylabel={$G(a)$},
        ymin=0, ymax=1,
        xmin=0, xmax=5,
        samples=100,
        domain=0.1:5,
        grid=both,
        grid style={dashed, gray!30},
        major grid style={thick, black!30},
        axis lines=middle,
        legend pos=south east
    ]
    \addplot[
        blue,
        thick
    ]{
        (4/pi) * (
            (-1)^0 / (1 + 2*0) * exp(-((1 + 2*0)^2 * pi^2) / (2*x^2)) +
            (-1)^1 / (1 + 2*1) * exp(-((1 + 2*1)^2 * pi^2) / (2*x^2)) +
            (-1)^2 / (1 + 2*2) * exp(-((1 + 2*2)^2 * pi^2) / (2*x^2)) 
        )
    };
    \end{axis}
\end{tikzpicture}
\end{center}

To state the main result, we define the following function:
\begin{equation}\label{g}
g(x) = \dfrac{1}{\sqrt{2 \pi}} \sum_{-\infty}^{+\infty} (-1)^k 
e^{-(x + k a)^2/2},
\end{equation}
and set:
\begin{equation}\label{G}
G(a) = \int_{-a/2}^{a/2} g(x)\, dx.
\end{equation}
In Lemma \ref{poisson} above, this 
integral is calculated as:
\begin{equation}\label{G-int}
G(a) = \dfrac{4}{\pi} \sum_{k \geq 0} 
\frac{(-1)^k}{1+2 k} \exp\left( -\frac{(1+2k)^2\pi^2}{2a^2}\right).
\end{equation}
For $a > 0$, $G(a)$ is smooth and strictly increasing, with 
\[
\lim_{a \to  0} G(a) = 0\quad\text{and}\quad
\lim_{a \to  \infty} G(a) = 1.
\]
\begin{proposition}[Sparse attacks]\label{t_main}
Let $\varepsilon > 0$ be independent of $n$ and small enough so that 
\[
0 < G(a) - \varepsilon < G(a) + \varepsilon < 1,
\]
but otherwise arbitrary. 
\begin{enumerate}
\item Suppose that the adversary can only perform those attacks which satisfy
\[
\operatorname{SR}(\boldsymbol{\theta}) < 
G(a) - \varepsilon.
\]
Then for every $\alpha > 0$ there exists a test\footnote{Strictly speaking, a sequence of tests indexed by $n$.} such that for all large enough $n$,
\[
\pp\left( \text{\normalfont{accept $H_1$}}\,|\, H_0 \right) \leq \alpha\quad 
\text{\normalfont{and}}\quad 
\pp\left( \text{\normalfont{accept $H_0$}}\,|\, H_1 \right) \leq \alpha.
\]
\item Conversely, based on the received observations, the adversary can design an attack $\boldsymbol{\theta}$ with only slightly higher sparsity, 
\[
\operatorname{SR}(\boldsymbol{\theta}) < 
G(a) + \varepsilon,
\]
such that regardless of classifier's test,
\begin{equation}\label{errors_sum}
P\left( \text{\normalfont{accept $H_0$}}\,|\, H_1 \right) + 
P\left( \text{\normalfont{accept $H_1$}}\,|\, H_0 \right) > 1 - O(e^{-n \varepsilon^2}).
\end{equation}
\end{enumerate}
\end{proposition}
The proof of (1) proceeds as follows: First, in \S\,\ref{coin_testing}, a related but simpler testing problem is considered. A biased coin with probability $p \neq 1/2$ of $1$ (and $(-1)$ otherwise) is tossed $n$ times, generating an iid sequence $Z_1, \ldots, Z_n$. After observing the sequence, the adversary flips a fraction of the outcomes. The classifier, knowing $p$, must decide if changes have been made. Part (1) is then proved in \S\,\ref{g_testing} by reducing the original problem to this coin testing scenario. The proof is straightforward, relying only on computing the integral \eqref{G} and applying Hoeffding's inequality.
\smallskip%

The proof of (2), found in \S\,\ref{generator}, involves constructing an explicit adversarial strategy. A key approach for the adversary is to ignore the classifier's test, even though it is known, and instead craft $\boldsymbol{\theta} = \boldsymbol{\theta}(\boldsymbol{X})$ such that $\boldsymbol{X}'$ remains an iid sequence of Gaussians. If this is achieved, the adversary can challenge the classifier with $\boldsymbol{X}'$, making it appear as if no attack occurred. This method is first demonstrated in a simpler setting of fixed $\ell_2$-norm attacks, where the adversary chooses any $\boldsymbol{\theta} \in \mathbb{R}^n$ with $\| \boldsymbol{\theta} \|_2 = R$. Here, $\Theta \subset \mathbb{R}^n$ is a hypersphere of radius $R$. This toy example is detailed in \S\,\ref{ell2}. Designing optimal attacks for the original problem is more complex than in the $\ell_2$-norm case, and the dependence of $\boldsymbol{\theta}$ on $\boldsymbol{X}$ involves Jacobi theta functions.
\smallskip%

Proposition \ref{t_main} provides quantitative results, including an estimate of the sample size required for effective testing, when one is possible. The big-O term in \eqref{errors_sum} comes from Hoeffding's bound. 
\smallskip%

It is instructive to consider the case where the adversary is not allowed to leave any coordinate $X_i$ unchanged, meaning $X'_i = X_i \pm a$. Here, $\Theta$ corresponds to the set of vertices of a hypercube with side length $2a$. Proposition \ref{t_main} suggests that, as long as $a$ is independent of $n$, no consistent testing is possible unless $a = 0$. We determine the rate at which $a$ must decay for the classifier to become unable to detect an attack. 
\begin{proposition}[Hypercube]\label{t_cube}
Assume 
\[
a = c/\sqrt{\ln n}
\]
for some constant $c$ independent of $n$. Suppose that for each $i$, 
\[
\theta_i = \left\{-a, a\right\}.
\]
\begin{enumerate}
\item If $c > \pi$, then for every $\alpha > 0$ there exists a test such that 
for all large enough $n$, 
\[
P\left( \text{\normalfont{accept $H_1$}}\,|\, H_0 \right) \leq \alpha\quad 
\text{\normalfont{and}}\quad 
P\left( \text{\normalfont{accept $H_0$}}\,|\, H_1 \right) \leq \alpha.
\]
\item Conversely, if $c < \pi/\sqrt{2}$, then, 
based on the received observations, the adversary can design an attack 
such that regardless of classifier's test,
\[
P\left( \text{\normalfont{accept $H_0$}}\,|\, H_1 \right) + 
P\left( \text{\normalfont{accept $H_1$}}\,|\, H_0 \right) > 1 - O\left( n^{ 1 - \frac{\pi^2}{2c^2}} \right).
\]
\end{enumerate}
\end{proposition}
The proof follows from Proposition \ref{t_main} by carefully analyzing the regime where $a \sim 1/\sqrt{\ln n}$.
\smallskip%

It is instructive to restate the result of Proposition \ref{t_cube} geometrically, in terms of volume bounds on the Minkowski sum of an arbitrary domain $\Omega$ and the hypercube $\Theta$. Since $\Theta$ is centrally symmetric, $\Omega + \Theta$ coincides with $\Omega - \Theta$. We obtain the following:
\begin{corollary}[ = Proposition \ref{t_cube}]
Assume 
\[
a = c/\sqrt{\ln n}
\]
for some constant $c$ independent of $n$. Let $\Theta$ be the set of vertices of a hypercube of side length $2a$, i.e.,
\[
\Theta = \left\{ (\theta_1, \ldots, \theta_n) \in \mathbb{R}^n \mid \theta_i = \pm a \text{ for } i = 1, \ldots, n \right\}, \quad \left|\Theta\right| = 2^n.
\]
\begin{enumerate}
\item If $c > \pi$, then for every $\alpha > 0$ and all large enough $n$, 
there exists a Borel set $\Omega$ such that 
\[
\gamma(\overline{\Omega}) \leq \alpha \quad 
\gamma(\Omega + \Theta) \leq \alpha.
\]
\item Conversely, if $c < \pi/\sqrt{2}$, then for each Borel set $\Omega$, 
\[
\gamma(\overline{\Omega}) + \gamma(\Omega + \Theta) > 1 - O\left( n^{ 1 - \frac{\pi^2}{2c^2}} \right).
\]
\end{enumerate}
\end{corollary}
We also state a related corollary, which follows from \S\,\ref{ell2}, providing a bound on the Gaussian volume of the Minkowski sum of $\Omega$ and a hypersphere.

\begin{corollary}[ = Proposition \ref{t_sphere}]
Let $S_{R}$ be the sphere of radius $R$ in $\mathbb{R}^n$, i.e.,
\[
S_{R} = \left\{ (\theta_1, \ldots, \theta_n) \in \mathbb{R}^n \mid \sum_{i = 1}^n \theta_i^2 = R^2 \right\},
\]
and let $B_{R/2}$ denote the ball of radius $R/2$ in $\mathbb{R}^n$, i.e.,
\[
B_{R/2} = \left\{ (x_1, \ldots, x_n) \in \mathbb{R}^n \mid \sum_{i = 1}^n x_i^2 \leq R^2/4 \right\}.
\]
For each Borel set $\Omega$, we have
\[
\gamma(\overline{\Omega}) + \gamma(\Omega + S_{R}) > 1 - 
\gamma(B_{R/2}).
\]
\end{corollary}
It is instructive to compare the above testing problem with the setting where the adversary must choose a perturbation \emph{before} observing the data. In this case, the vector $\boldsymbol{\theta}$ is independent of $\boldsymbol{X}$, making the problem equivalent to detecting a signal ($\boldsymbol{\theta}$) in Gaussian noise ($\boldsymbol{X}$), a well-studied problem in statistics; see, e.g., \cite{Burn, Ing-2, 
AC, AC-2, AC-3, quatro, Ing-Tsyb, Ing-Tsyb-2, Donoho-J, Donoho-J-2, Donoho-J-3, Baraud, Cai, Carp, Mukh-Mukh} and references therein. To emphasize the difference from the adversarial testing problem addressed here, consider a specific scenario. Let $\boldsymbol{X} = (X_1, \ldots, X_n)$ be iid random variables, each distributed as $N(0,1)$. Without observing the realizations of $X_i$, the adversary selects a perturbation $\boldsymbol{\theta}$ such that $\theta_i \in \{-a, a\}$ for all $i$. The classifier must then decide between $H_0$ and $H_1$ after receiving a potentially contaminated sample. Although the set of possible signals $\boldsymbol{\theta}$ forms a hypercube, similar to Proposition \ref{t_cube}, the rate of decay of $a$ sufficient for consistent testing is much higher than $1/\sqrt{\ln{n}}$. To show this, observe that, since $X_i$ and $\theta_i$ are independent, $(X_i + \theta_i)^2$ and $(X_i - \theta_i)^2$ are identically distributed. Under $H_1$, the squared norm of $(X'_1, \ldots, X'_n)$ is as follows:
\[
\|\boldsymbol{X}'\|^2 = \sum_{i} (X_i + \theta_i)^2 \overset{d}{=} \sum_{i} (X_i + a)^2.
\]
We compute:
\[
\mathbb{E}((X+a)^2) = 1 + a^2, \quad 
\operatorname{Var}((X+a)^2) = 2(1 + 2a^2).
\]
Using Chebyshev's inequality, we get:
\[
P\left( | n^{-1/2} \sum_i (X_i + a)^2 - n^{1/2} (1 + a^2) | \geq \varepsilon \right) \leq 
2 (1+2a^2) \varepsilon^{-2} < 6 \varepsilon^{-2} \quad \text{for } a < 1.
\]
Under $H_1$, if $a$ decays slower than $n^{-1/4}$, the normalized squared norm $\|\boldsymbol{X}'\|^2$ concentrates around $\sqrt{n}(1+a^2)$, significantly exceeding $\sqrt{n}$. Conversely, under $H_0$, we get:
\[
P\left( | n^{-1/2} \sum_i X_i^2 - n^{1/2} | \geq \varepsilon \right) \leq 2 \varepsilon^{-2}.
\]
This example illustrates that data-dependent adversarial perturbations are significantly harder to detect than fixed or randomly chosen perturbations.
\smallskip%

While there is a large body of literature on random perturbations, adversarial settings are less documented.
The work \cite{Versh} addresses the adversarial testing problem considered here and provides a complete solution for a class of so-called \emph{highly symmetric} sets $\Theta$. Although this class is broad and natural, it does not cover the examples considered in the present paper. In addition to solving the problem for highly symmetric sets, \cite{Versh} introduces the concept of \emph{focused Gaussian width}, which is well-suited for this problem. The detection tests in their work can also be applied to non-Gaussian data, unlike the test in Proposition \ref{t_main}, which is designed as a normality test.

\section{$\ell_2$-attacks}\label{ell2}
To illustrate the approach used in \S\,\ref{generator}, 
we consider a simpler problem. Fix a radius $R > 0$, known to the adversary and the classifier. The adversary observes a sample $(X_1, \ldots, X_n)$, $X_i \sim N(0,1)$, and can select any data-dependent perturbation $(\theta_1, \ldots, \theta_n)$ such that 
\begin{equation}\label{theta_R}
\sum_i \theta_i^2 = R^2.
\end{equation}
\begin{proposition}[Sphere]\label{t_sphere}
Based on the received observations, the adversary 
can design an attack $\boldsymbol{\theta}$ satisfying \eqref{theta_R} such that, regardless of the 
classifier's test,
\[
P\left( \text{\normalfont{accept $H_0$}}\,|\, H_1 \right) + 
P\left( \text{\normalfont{accept $H_1$}}\,|\, H_0 \right) > 1 - P(B_{R/2}),
\]
where $B_{R/2}$ is the ball of radius $R/2$, and $P(B_{R/2})$ is the probability that $(X_1, \ldots, X_n)$ falls inside $B_{R/2}$.
\end{proposition}
\begin{proof}
To begin, we define a random vector $\boldsymbol{Y}$ as follows. If $\boldsymbol{X} = (X_1, \ldots, X_n)$ falls outside the ball $B_{R/2}$, where $B_{R/2}$ is the ball of radius $R/2$ centered at the origin, we proceed by constructing a sphere $C_R$ of radius $R$ centered at $\boldsymbol{X}$. Let $S_{\|\boldsymbol{X}\|}$ denote the sphere of radius $\|\boldsymbol{X}\|$ centered at the origin. The intersection of the spheres $S_{\|\boldsymbol{X}\|}$ and $C_R$ forms another sphere, but of dimension $n-2$. We endow this resulting $(n-2)$-dimensional sphere with the uniform measure and choose $\boldsymbol{Y}$ as a random point on it. On the other hand, if $\boldsymbol{X}$ lies inside $B_{R/2}$, we set $\boldsymbol{Y} = \boldsymbol{X}$.
\smallskip%

We claim that $\boldsymbol{Y}$ is again a Gaussian vector with iid components, each $N(0,1)$. By construction, the norm of $\boldsymbol{Y}$ equals that of $\boldsymbol{X}$, and the direction of $\boldsymbol{Y}$ is uniformly distributed, as it is for $\boldsymbol{X}$. Additionally, the direction of $\boldsymbol{Y}$ is independent of its norm. These properties characterize the Gaussian distribution in $\mathbb{R}^n$, and the claim follows.
\smallskip%

The adversary's strategy for constructing \( \boldsymbol{X}' \) is as follows: Given \( \boldsymbol{X} \), the adversary evaluates \( \boldsymbol{Y} \) as described above. If \( \boldsymbol{X} \) lies outside \( B_{R/2} \), then set \( \boldsymbol{X}' = \boldsymbol{Y} \). If \( \boldsymbol{X} \) is inside \( B_{R/2} \), choose \( \boldsymbol{X}' \) to be any point at a distance \( R \) from \( \boldsymbol{X} \). Under this strategy, we derive the following lower bound on the false-negative rate:
\begin{multline*}
P\left( \text{\normalfont{accept }} H_0 \,|\, H_1 \right) = P(\boldsymbol{X}' \in \Omega) \geq P(\boldsymbol{X} \notin B_{R/2}, \boldsymbol{Y} \in \Omega) \geq \\
P(\boldsymbol{X} \notin B_{R/2}) + P(\boldsymbol{Y} \in \Omega) - 1 = P(\boldsymbol{X} \notin B_{R/2}) + P(\boldsymbol{X} \in \Omega) - 1.
\end{multline*}
By definition, \( P\left( \text{\normalfont{accept }} H_1 \,|\, H_0 \right) = 1 - P(\boldsymbol{X} \in \Omega) \), and the proposition follows. \qed
\end{proof}

\section{Coin testing}\label{coin_testing} 
To establish a key lemma for \S\,\ref{g_testing}, we consider adversarial attacks on Bernoulli-distributed data. Let \( Z \) be a random variable with \( P(Z = 1) = p > 1/2 \) and \( P(Z = -1) = 1 - p \). Both the adversary and the classifier know \( p \) and agree on a number \( \varepsilon > 0 \), which will measure 
the attack strength. The adversary observes independent trials \( (Z_1, \ldots, Z_n) \) from \( Z \) and selects a subset of \( Z_i \), of fraction less than \( (2p - 1) - \varepsilon \), to leave intact while flipping the signs of the rest. The classifier receives the possibly contaminated sequence and must choose between \( H_0 \) (no flips) and \( H_1 \) (flipped). We show that if the fraction of unaffected points is less than \( 2p - 1 - \varepsilon \), reliable testing is always possible.
\begin{lemma}\label{coin_lemma}
Let \( \varepsilon > 0 \) be independent of \( n \) and small enough that \( (2p - 1) - \varepsilon > 0 \). Suppose that the adversary flips all but a fraction less than \( (2p - 1) - \varepsilon \) of \( Z_1, \ldots, Z_n \). Then for every \( \alpha > 0 \), there exists a test such that for all large enough $n$,
\[
P(\text{\normalfont{accept }} H_1 \,|\, H_0) \leq \alpha, \quad \text{\normalfont{and}} \quad P(\text{\normalfont{accept }} H_0 \,|\, H_1) \leq \alpha.
\]
\end{lemma}
\begin{proof}
Consider \( A_n(Z_1, \ldots, Z_n) = n^{-1} \sum_i Z_i \). Hoeffding's inequality gives, for each \( \lambda > 0 \),
\begin{equation}\label{noga}
P(A_n - \mathbb{E}(A_n) \leq -\lambda/\sqrt{n}) \leq e^{-\lambda^2/2}, \quad \mathbb{E}(A_n) = 2p - 1.
\end{equation}
Choose \( \lambda \) such that \( e^{-\lambda^2/2} < \alpha \). Let \( Z'_1, \ldots, Z'_n \) be the observations received by the classifier, and define the test:
\[
\Omega = \left\{ \sqrt{n}(A_n(Z'_1, \ldots, Z'_n) - (2p - 1)) > -\lambda \right\}.
\]
Under \( H_0 \) (no attack), the false alarm probability is \( \leq \alpha \). Under \( H_1 \) (attack), we obtain:
\begin{multline*}
n^{-1} \sum_i Z'_i = 
n^{-1} \left( \sum_{\text{flipped}} Z'_i + \sum_{\text{not flipped}} Z'_i  \right) = 
n^{-1} \left( -\sum_{\text{flipped}} Z_i + \sum_{\text{not flipped}} Z_i  \right) = \\
= -n^{-1} \sum_i Z_i + 2n^{-1} \sum_{\text{not flipped}} Z_i < 
-n^{-1} \sum_i Z_i + 2(2p - 1 - \varepsilon),
\end{multline*}
where \say{flipped} refers to points changed by the adversary. Thus if $Z_1, \ldots, Z_n$ and $n$ are such that 
\[
n^{-1} \sum_i Z_i - (2p - 1) > -\lambda/\sqrt{n},\quad n > \lambda^2/\varepsilon^2,
\]
we get:
\[
n^{-1} \sum_i Z'_i - (2p - 1) < 
-n^{-1} \sum_i Z_i + (2p - 1) - 2\varepsilon < \lambda/\sqrt{n} - 2\varepsilon < \lambda/\sqrt{n}.
\]
Consequently, under $H_1$, the events
\[
\sqrt{n}(A_n(Z'_1, \ldots, Z'_n) - (2p - 1)) > -\lambda, \qquad 
\sqrt{n}(A_n(Z_1, \ldots, Z_n) - (2p - 1)) > -\lambda
\]
are mutually exclusive, and the lemma follows. \qed
\end{proof}
We now analyze the case where the adversary is forced to flip the signs of all observations \( Z_i \), while letting \( p \) approach \( 1/2 \), but not too quickly.
\begin{lemma}\label{coin_lemma_n}
Assume \( \sqrt{n}(2p - 1) \to \infty \) as \( n \to \infty \), and the adversary is no longer allowed to leave any \( Z_i \) intact. Then for every \( \alpha > 0 \), there exists a test such that for all large enough $n$,
\[
P(\text{\normalfont{accept }} H_1 \,|\, H_0) \leq \alpha, \quad \text{\normalfont{and}} \quad P(\text{\normalfont{accept }} H_0 \,|\, H_1) \leq \alpha.
\]
\end{lemma}
\begin{proof}
We define a simpler test as:
\[
\Omega = \left\{ A_n(Z'_1, \ldots, Z'_n) > 0 \right\}.
\]
Choose \( n \) large enough that \( \sqrt{n}(2p - 1) > \lambda \). From \eqref{noga}, we get:
\[
A_n(Z_1, \ldots, Z_n) > A_n(Z_1, \ldots, Z_n) - \left[ (2p - 1) - \lambda/\sqrt{n} \right] > 0
\]
with probability at least \( 1 - e^{-\lambda^2/2} \). 
Under \( H_1 \), we obtain:
\[
\sum_i Z'_i = -\sum_i Z_i.
\]
Consequently, under $H_1$, the events
\[
A_n(Z'_1, \ldots, Z'_n) > 0, \qquad 
A_n(Z_1, \ldots, Z_n) > 0
\]
are mutually exclusive, and the lemma follows. \qed
\end{proof}

\section{Proof of (1) of Propositions \ref{t_main} and \ref{t_cube}}\label{g_testing}
Let \( X_1, \ldots, X_n \) be iid random variables, each \( N(0,1) \), representing the original observations. Partition the real line \( \mathbb{R} \) into a countable union of bins \( U_k \), defined as:
\[
U_k = \left\{ x \in \mathbb{R} : k a - \frac{a}{2} < x < k a + \frac{a}{2} \right\}, \quad k \in \mathbb{Z}.
\]
Whatever the values of $X_i$, chances are all of them lie within the union of 
$U_k$. Define the random variable $Z_i$ associated with $X_i$ as:
\[
Z_i = 
\begin{cases} 
1 & \text{if } X_i \in U_k \text{ for some even } k, \\
-1 & \text{if } X_i \in U_k \text{ for some odd } k.
\end{cases}
\]
Let $p_k$ denote the probability measure of $U_k$, i.e.,
\[
p_k = (2 \pi)^{-1/2} \int_{k a -a/2}^{ka + a/2} e^{-x^2/2} \, dx.
\]
Each \( Z_i \) is a Bernoulli random variable with
\begin{equation}\label{Z_bern}
P(Z_i = 1) - P(Z_i = -1) = \sum_{k} (-1)^k p_k.
\end{equation}
Using the function $G$ defined in \eqref{G}, we get:
\[
\sum_{k} (-1)^k p_k = (2\pi)^{-1/2}\sum_{k} \int_{k a -a/2}^{ka + a/2} e^{-x^2/2} \, dx = 
(2\pi)^{-1/2} \sum_{k} \int_{-a/2}^{a/2} e^{-(x + k a)^2/2} \, dx = G(a).
\]
In the event of an attack, the adversary replaces the original observations with contaminated observations \( X'_1, \ldots, X'_n \), where \( X'_i = X_i + \theta_i \), and
\[
\theta_i \in \{-1, 0, 1\}, \quad 
\frac{\#\{ i : \theta_i = 0 \}}{n} < G(a) - \varepsilon.
\]
Similar to $Z_i$, define $Z'_{i}$ as:
\[
Z'_i = 
\begin{cases} 
1 & \text{if } X'_i \in U_k \text{ for some even } k, \\
-1 & \text{if } X'_i \in U_k \text{ for some odd } k.
\end{cases}
\]
Observe that
\[
Z'_i = 
\begin{cases} 
Z_i & \text{if } \theta_i = 0, \\
-Z_i & \text{otherwise},
\end{cases}
\]
Thus, under \( H_0 \) (no attack), we have \( Z'_i = Z_i \), and the classifier observes a Bernoulli sequence characterized by \eqref{Z_bern}. Under \( H_1 \) (attack), the observed sequence \( Z'_i \) results from flipping the signs of \( Z_i \), with fewer than \( G(a) - \varepsilon \) of the observations not flipped.
\smallskip%

As shown in Lemma \ref{coin_lemma}, it is possible to consistently distinguish between these two cases, regardless of how small \( \varepsilon \) is. This completes the proof of (1) of Proposition \ref{t_main}.
\smallskip%

Set \( a = c/\sqrt{\ln{n}} \). To establish Proposition \ref{t_cube}, it suffices to show that if \( c > \pi \), then \( \sqrt{n} G(a) \to \infty \) as \( n \to \infty \). The proof then follows from Lemma \ref{coin_lemma_n}. 
\smallskip%

Substituting $a = c/\sqrt{\ln{n}}$ into \eqref{G-int}, we get:
\begin{multline*}
G(a) = \dfrac{4}{\pi} \sum_{k \geq 0} 
\frac{(-1)^k}{1+2 k} 
e^{-(1+2k)^2 \cdot \pi^2/2a^2} > 
\dfrac{4}{\pi} \left[
e^{-\pi^2/2a^2} - \frac{1}{3} e^{-9 \cdot \pi^2/2a^2} + \ldots \right] > 
\dfrac{4}{\pi} e^{-\pi^2/2a^2} \left[
1 - \frac{1}{3} e^{-4\pi^2/a^2}
\right] =  \\
= \dfrac{4}{\pi} n^{-\pi^2/2c^2}\left[
1 - \frac{n^{-4\pi^2/c^2}}{3} \right] > 
n^{-\pi^2/2c^2}\quad \text{for $n$ large enough.}
\end{multline*}
This completes the proof of (2) of Proposition \ref{t_cube}, pending the proof of \eqref{G-int}. 
\begin{lemma}[\eqref{G-int}]
For each $a > 0$,
\[
G(a) = \int_{-a/2}^{a/2} g(x)\, dx = \dfrac{4}{\pi} \sum_{k \geq 0} 
\frac{(-1)^k}{1+2 k} \exp\left( -\frac{(1+2k)^2\pi^2}{2a^2}\right).
\]
\end{lemma}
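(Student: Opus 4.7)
The plan is to recognize this as a classical Poisson summation identity (as the lemma's label hints). Rewrite the summand as $g(x) = \sum_k (-1)^k p(x+ka)$, where $p$ is the standard Gaussian density from \eqref{fp} (the alternating sign is forced by the identification $g(x) = \gamma(x)p(x)$ on $(-a/2,a/2)$ already noted in the paper). This makes $g$ anti-periodic with period $a$ and even; hence its Fourier series on $[-a,a]$ contains only cosines of odd harmonics. I would make this explicit by splitting the alternating sum by parity, $g(x) = \sum_k p(x+2ka) - \sum_k p(x+a+2ka)$, and applying the Poisson summation formula to each sum (with period $2a$). Using the Gaussian Fourier transform $\int p(y)\, e^{-2\pi i \xi y}\, dy = e^{-2\pi^2\xi^2}$, the even-indexed harmonics cancel and one obtains
\[
g(x) = \frac{2}{a} \sum_{m \geq 0} e^{-\pi^2 (2m+1)^2/(2a^2)} \cos\!\bigl(\pi(2m+1)x/a\bigr).
\]

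\textbf{Termwise integration.} With this Fourier expansion in hand, the next step is to integrate term by term over $[-a/2,a/2]$. Each integral $\int_{-a/2}^{a/2} \cos(\pi(2m+1)x/a)\, dx$ evaluates to $\frac{2a(-1)^m}{\pi(2m+1)}$ via $\sin(\pi(2m+1)/2) = (-1)^m$. The factors of $a$ then cancel and the claimed identity drops out. The interchange of sum and integral is justified by the super-exponential decay of the Fourier coefficients.

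\textbf{Main obstacle and a shortcut.} There is no genuine analytic obstacle---this is a textbook Poisson summation computation; the only thing to watch is the bookkeeping of the factors of $\pi$ and $a$ once the alternating signs are incorporated. An equivalent but more streamlined route avoids Poisson summation entirely: notice that $G(a) = \sum_k (-1)^k p_k = \int p(x)\, h(x)\, dx$, where $h$ is the square wave of period $2a$ equal to $+1$ on $(-a/2,a/2)$. Inserting the classical Fourier series $h(x) = \frac{4}{\pi}\sum_{m\geq 0}\frac{(-1)^m}{2m+1}\cos(\pi(2m+1)x/a)$ and invoking the Gaussian characteristic function $\int p(x)\cos(\omega x)\, dx = e^{-\omega^2/2}$ at $\omega = \pi(2m+1)/a$ reproduces the same formula in one line, and this is probably the cleanest version to write up.
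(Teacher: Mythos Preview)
Your proposal is correct. The main route---Poisson summation to obtain the Fourier cosine expansion of $g$, followed by termwise integration---is essentially what the paper does: the paper applies Poisson summation directly to the theta-type sum $\sum_k e^{k(\pi i - ax) - k^2a^2/2}$ and then integrates the resulting complex exponentials over $[-a/2,a/2]$, whereas you split the alternating sum by parity and apply Poisson summation to each periodized Gaussian separately. The algebra is organized differently, but the underlying identity and the sequence of steps (Poisson summation on $g$, then integrate) are the same.

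Your shortcut, on the other hand, is a genuinely different and cleaner argument. The paper never writes $G(a)=\int p(x)\,h(x)\,dx$ with $h$ the square wave; it stays on the $g$-side throughout. By moving the Fourier expansion from $g$ to the elementary square wave and then invoking the Gaussian characteristic function, you bypass the theta-function manipulation entirely and reduce the lemma to two textbook facts (the Fourier series of the square wave and $\int p(x)\cos(\omega x)\,dx = e^{-\omega^2/2}$). This is shorter and avoids the somewhat delicate bookkeeping in the paper's version; the only small point to tighten when writing it up is the justification for swapping $\int$ and $\sum$, since the square-wave Fourier series is not absolutely convergent---bounded convergence of its partial sums (or Parseval on $[-a,a]$) handles this.
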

\begin{proof}
This integral is computed 
using the Poisson summation 
formula:
\begin{equation}\label{poisson}
\sum_{k = -\infty}^{\infty} f(k) = 
\sum_{k = -\infty}^{\infty} \int_{\rr} f(y) e^{-2\pi i k y} \, dy,
\end{equation}
where \( f \) satisfies certain conditions (e.g., Schwartz). See \S\,6 in \cite{Bellman}. 
\smallskip%

We write:
\[
g(x) = (2\pi)^{-1/2} \sum_{k} (-1)^k 
e^{-(x+k a)^2/2}\, dx = 
(2\pi)^{-1/2} e^{-x^2/2} 
\sum_k 
e^{k (\pi i - a x) - k^2 a^2/2},
\]
and using \eqref{poisson}, 
\[
(2\pi)^{-1/2} e^{-x^2/2} 
\sum_k 
e^{k (\pi i - a x) - k^2 a^2/2} = 
(2\pi)^{-1/2} e^{-x^2/2} \sum_k \int_{\rr} e^{y (\pi i - a x) - y^2 a^2/2} \cdot 
e^{-2 \pi i k y}\, dy,
\]
we find:
\[
(2\pi)^{-1/2} \sum_{k} (-1)^k 
e^{-(x+k a)^2/2}\, dx = 
a^{-1} e^{-\pi^2/2a^2} 
\sum_k e^{(2k-1)\pi i x/a} 
\cdot e^{2 \pi^2 k/a^2 - 2\pi^2 k^2/a^2}.
\]
Calculating
\[
\int_{-a/2}^{a/2} e^{(2k-1)\pi i x/a}\, dx = 
(-1)^k \dfrac{2 a}{(1-2k)\pi},
\]
we get:
\begin{multline*}
\int_{-a/2}^{a/2} g(x)\, dx = 
\dfrac{2}{\pi} e^{-\pi^2/2 a^2} 
\sum_k \dfrac{(-1)^k}{1+2k} e^{-2 \pi^2 k/a^2 - 2\pi^2 k^2/a^2} = \\
= \dfrac{2}{\pi}  
\sum_k \dfrac{(-1)^k}{1+2k} 
e^{-\pi^2/2a^2 \cdot (1+2k)^2} = 
\dfrac{4}{\pi}  
\sum_{k \geq 0} \dfrac{(-1)^k}{1+2k} 
e^{-\pi^2/2a^2 \cdot (1+2k)^2},
\end{multline*}
and the lemma follows. \qed
\end{proof}

\section{Proof of (2) of Propositions \ref{t_main} and \ref{t_cube}}\label{generator}
To design the adversary's strategy, we use the initial sample \( X_1, \ldots, X_n \) and the value \( a \) to generate a new normal sample \( X'_1, \ldots, X'_n \), where for each \( i \), \( X'_i - X_i \in \{-a, 0, a\} \). To begin with, we construct a discrete-time Markov chain on \( \mathbb{R} \) with the following properties. For \( x, y \in \mathbb{R} \), let \( q(y|x) \) denote the probability of transitioning to \( y \) from \( x \). The transition probabilities are as follows:
\[
q(x + a \mid x) = \varphi(x), \quad q(x \mid x) = \gamma(x), \quad q(x - a \mid x) = 1 - \varphi(x) - \gamma(x),
\]
and \( q(y \mid x) = 0 \) for other values of \( y \).
\smallskip%

Let \( p \) be the density of the standard normal distribution \( N(0,1) \), 
defined as:
\[
p(x) = (2\pi)^{-1/2} e^{-x^2/2}.
\]
The functions \( \varphi \) and \( \gamma \) are chosen to satisfy the following condition:
\begin{equation}\label{stationary}
\varphi(x - a) p(x - a) + \gamma(x) p(x) + \left[1 - \varphi(x + a) - \gamma(x + a)\right] p(x + a) = p(x),
\end{equation}
which ensures that the standard normal distribution is the stationary distribution of the Markov chain. 
\smallskip%

In addition, the function $\gamma$ is designed to satisfy:
\begin{equation}\label{int_gamma}
\int \gamma(x) p(x)\, dx = G(a),
\end{equation}
where $G(a)$ is defined by \eqref{G}. In effect, this condition bounds the probability that the Markov chain remains in place after one step. 
\smallskip%

Before constructing the functions \( \varphi \) and \( \gamma \), first show how they will be used in the proof. The adversary's strategy for constructing \( X'_1, \ldots, X'_n \) proceeds as follows: Given \( X_1, \ldots, X_n \), the adversary starts with \( X_1 \) as the initial state of the Markov chain defined above. After one transition step, \( X_1 \) moves to a new state, denoted \( Y_1 \). Since \( X_1 \) is Gaussian, \( Y_1 \) is also Gaussian. The adversary repeats this process for each observation, obtaining an iid sequence \( Y_1, \ldots, Y_n \), each 
\( Y_i \) being \( N(0,1) \).
\smallskip%

Let \( T \) be the event:
\[
T = \left\{ \frac{\#\{ i : X_i = Y_i \}}{n} < G(a) + \varepsilon \right\}.
\]
If \( T \) occurs, set \( X'_i = Y_i \) for each \( i \). Otherwise, choose \( X'_i \) arbitrarily as either \( X_i - a \) or \( X_i + a \). Conditioning on \( T \), the adversary can challenge any test \( \Omega \) using \( X'_1, \ldots, X'_n \), achieving the following lower bound on the false-negative rate:
\begin{multline*}
P\left( \text{\normalfont{accept }} H_0 \,|\, H_1 \right) = 
P((X'_1, \ldots, X'_n) \in \Omega) \geq 
P(T \cap (Y_1, \ldots, Y_n) \in \Omega) \geq \\
 \geq P(T) + P((Y_1, \ldots, Y_n) \in \Omega) - 1 = 
P(T) + P((X_1, \ldots, X_n) \in \Omega) - 1.
\end{multline*}
Since $P\left( \text{\normalfont{accept }} H_1 \,|\, H_0 \right) = 1 - P((X_1, \ldots, X_n)$, it follows that
\[
P\left( \text{\normalfont{accept }} H_0 \,|\, H_1 \right) + 
P\left( \text{\normalfont{accept }} H_1 \,|\, H_0 \right) \geq P(T).
\]
It remains to show that \( T \) occurs with high probability. Define \( Z_i \) as:
\[
Z_i = 
\begin{cases} 
1 & \text{if } X_i = Y_i, \\
0 & \text{otherwise}.
\end{cases}
\]
Since $P(Z_i = 1\,|\,X_i) = \gamma(X_i)$, it follows from \eqref{int_gamma} that
\[
P(Z_i = 1) = 
\int \gamma(x) p(x)\, dx = G(a).
\]
Letting $S_n = Z_1 + \cdots + Z_n$, Hoeffding's bound gives:
\[
P(\overline{T}) = 
P(n^{-1} S_n - G(a) \geq \varepsilon) \leq e^{-2 n \varepsilon^2},
\]
completing the proof of Proposition \ref{t_main}.
\smallskip%

Set $a = c/\sqrt{\ln{n}}$. To establish Proposition \ref{t_cube}, it suffices to show 
that if $c < \pi/\sqrt{2}$, then $S_n = 0$ with high probability. Indeed:
\begin{multline}\label{cube_O}
P(S_n = 0) = (1 - G(a))^n 
\stackrel{\eqref{G-int}}{=}  
\left(
1 - 
\dfrac{4}{\pi} \sum_{k \geq 0} 
\frac{(-1)^k}{1+2 k} 
e^{-(1+2k)^2 \cdot \pi^2/2a^2 }
\right)^n > 
\left(
1 - 
\dfrac{4}{\pi} e^{-\pi^2/2a^2}
\right)^n = \\
= \left(
1 - 
\dfrac{4}{\pi} n^{-\pi^2/2c^2}
\right)^n = 
1 - O\left( n^{-\pi^2/2c^2} \right).
\end{multline}
The rest of this section shows the existence of functions \( \gamma \) and \( \varphi \) that satisfy the necessary conditions. To begin with, define $\gamma$ as:
\[
\gamma(x) =
\begin{cases} 
    \sum_{k = -\infty}^{\infty} (-1)^k e^{-k a x - k^2 a^2/2},  & \text{if } x \in (-a/2, a/2), \\
    0, & \text{otherwise}.
\end{cases}
\]
For \( x \) where \( \gamma(x) \neq 0 \), it relates to the function \( g \) defined by \eqref{g} via
\[
p(x) \gamma(x) = g(x).
\]
Thus, to establish \eqref{int_gamma}, we have
\[
\int_{\mathbb{R}} \gamma(x) p(x) \, dx = 
\int_{-a/2}^{a/2} g(x) \, dx = G(a).
\]
Since \( \gamma \) represents a transition probability, it must satisfy
\[
0 \leq \gamma(x) \leq 1 \quad 
\text{for each } x.
\]
Let us first prove that \( \gamma \) is non-negative, i.e.,
\begin{equation}\label{positive}
\gamma(x) = \sum_{k} (-1)^k e^{-k a x - k^2 a^2/2} > 0 \quad \text{for all } |x| < a/2.
\end{equation}
Setting $b = -a^2$ and $z = \pi i - x a$, we can rewrite the series as:
\[
\sum_k (-1)^k e^{-k a x - k^2 a^2/2} = 
\sum_k e^{\pi i k - k a x - k^2 a^2/2} = 
\sum_k e^{k z + b k^2/2}.
\]
This series, representing the Jacobi theta 
function $\vartheta_3(z)$, has zeros forming a lattice with basis $2 \pi i$, $b$, centered at $z_0 = \pi i + b/2$. 
This classical result is found in \S\,2 in \cite{Dub}. See also \cite{Bellman}. In the $x$-coordinate, the zeros are given by 
\[
\dfrac{a}{2} + \dfrac{2 \pi i\, n}{a} + a\, m\quad \text{for all $n,m \in \zz$}.
\]
Consequently, 
\begin{equation}\label{nozero}
\sum_k (-1)^k e^{-k a x - k^2 a^2/2} \neq 0\quad \text{for each $a > 0$ 
and $|x| < a/2$,}
\end{equation}
and this series vanishes at $x = \pm a/2$. Setting $x = 0$, we find:
\[
\sum_k (-1)^k e^{-k^2 a^2/2} \to 1\quad 
\text{as $a \to \infty$.}
\]
confirming \eqref{positive} for $x = 0$ and large $a$. But then, using \eqref{nozero}, we extend \eqref{positive} to all $a > 0$ and $|x| < a/2$. Thus, we have shown that \( \gamma \) is non-negative.
\smallskip%

Let us now show that \( \gamma(x) \leq 1 \) for each \( x \). Assuming \( |x| < a/2 \), we find:
\[
\gamma(x) p(x) = \sum_k (-1)^k p(x + k a) = p(x) + 
\sum_{k \geq 1} (-1)^{k} p(x + k a) + \sum_{k \leq -1} (-1)^{k} p(x + k a) < p(x),
\]
since \( p \) is an even function and monotonically decreases for \( x > 0 \). 
\smallskip%

With \( \gamma \) constructed, we now find \( \varphi \). To do this, we replace the stationarity condition \eqref{stationary} with the stronger \emph{detailed balance} condition:
\[
\varphi(x) p(x) = \left[ 1 - \gamma(x+a) - \varphi(x+a) \right] p(x+a).
\]
A Markov chain satisfying detailed balance for \( p \) necessarily has \( p \) as a stationary distribution. At the same time, this equation is straightforward to solve:
\[
p(x) \varphi(x) = \sum_{k \geq 1} (-1)^{k+1} \left[ 1 - \gamma(x + k a) \right] p(x + k a).
\]
This formula recovers \( \varphi \). It remains to verify that \( 0 \leq \varphi(x) \leq \varphi(x) + \gamma(x) \leq 1 \) for all \( x \). This is indeed true, and the verification follows similarly to the argument for \( 0 \leq \gamma(x) \leq 1 \) above. We omit the detailed calculation for brevity.

\bibliographystyle{plain}
\bibliography{ref}

\begin{thebibliography}{10}

\bibitem{quatro}
L.~Addario-Berry, N.~Broutin, L.~Devroye, and G.~Lugosi.
\newblock On combinatorial testing problems.
\newblock {\em {A}nnals of {S}tatistics}, 38(5), 2010.

\bibitem{AC-2}
E.~Arias-Castro, E.~Cand{\'e}s, and A.~Durand.
\newblock Detection of an anomalous cluster in a network.
\newblock {\em {A}nnals of {S}tatistics}, 39(1), 2011.

\bibitem{AC}
E.~Arias-Castro, E.~Cand{\'e}s, H.~Helgason, and O.~Zeitouni.
\newblock Searching for a trail of evidence in a maze.
\newblock {\em {A}nnals of {S}tatistics}, 36(4), 2008.

\bibitem{AC-3}
E.~Arias-Castro, E.~Cand{\'e}s, and Y.~Plan.
\newblock Global testing under sparse alternatives: {ANOVA}, multiple comparisons and the higher criticism.
\newblock {\em {A}nnals of {S}tatistics}, 39(5), 2011.

\bibitem{Baraud}
Y.~Baraud.
\newblock Non-asymptotic minimax rates of testing in signal detection.
\newblock {\em Bernoulli}, 8(5):577--606, 2002.

\bibitem{Bellman}
R.~Bellman.
\newblock {\em A Brief Introduction to Theta Functions}.
\newblock Holt, Rinehart and Winston, New York, 1961.

\bibitem{Burn}
M.~Burnashev.
\newblock On the minimax detection of an inaccurately known signal in a white gaussian noise background.
\newblock {\em Theory of {P}robability and {I}ts {A}pplications}, 24(1):107–119, 1979.

\bibitem{Cai}
T.~Cai, J.~Jin, and M.~Low.
\newblock Estimation and confidence sets for sparse normal mixtures.
\newblock {\em {A}nnals of {S}tatistics}, 35(6), 2007.

\bibitem{Carp}
A.~Carpentier, O.~Collier, L.~Comminges, A.~Tsybakov, and Y.~Wang.
\newblock Minimax rate of testing in sparse linear regression.
\newblock {\em Automation and {R}emote {C}ontrol}, 80(10):1817–1834, 2019.

\bibitem{Donoho-J}
D.~Donoho and J.~Jin.
\newblock Higher criticism for detecting sparse heterogeneous mixtures.
\newblock {\em Annals of Statistics}, 32(3), 2004.

\bibitem{Donoho-J-2}
D.~Donoho and J.~Jin.
\newblock Higher criticism thresholding: {O}ptimal feature selection when useful features are rare and weak.
\newblock {\em Proceedings of the {N}ational {A}cademy of {S}ciences}, 105(39):14790–14795, 2008.

\bibitem{Donoho-J-3}
D.~Donoho and J.~Jin.
\newblock Feature selection by higher criticism thresholding achieves the optimal phase diagram.
\newblock {\em Philosophical {T}ransactions of the {R}oyal {S}ociety {A}: {M}athematical, {P}hysical and {E}ngineering Sciences}, 367(1906):4449–4470, 2009.

\bibitem{Dub}
B.~Dubrovin.
\newblock Theta functions and non-linear equations.
\newblock {\em Russian Mathematical Surveys}, 36(2):11–92, 1981.

\bibitem{Ing-2}
Y.~Ingster.
\newblock Minimax detection of a signal in $\ell_p$-metrics.
\newblock {\em Journal of Mathematical Sciences}, 68(4):503–515, 1994.

\bibitem{Ing-Tsyb}
Y.~Ingster, C.~Pouet, and A.~Tsybakov.
\newblock Classification of sparse high-dimensional vectors.
\newblock {\em Philosophical {T}ransactions: {M}athematical, {P}hysical and {E}ngineering Sciences}, 367(1906):4427--4448, 2009.

\bibitem{Ing-Tsyb-2}
Y.~Ingster, A.~Tsybakov, and N.~Verzelen.
\newblock Detection boundary in sparse regression.
\newblock {\em Electronic Journal of Statistics}, 4, 2010.

\bibitem{Versh}
S.~Mendelson, G.~Paouris, and R.~Vershynin.
\newblock Can we spot a fake?
\newblock preprint, 2024.

\bibitem{Mukh-Mukh}
R.~Mukherjee, S.~Mukherjee, and M.~Yuan.
\newblock Global testing against sparse alternatives under {I}sing models.
\newblock {\em {A}nnals of {S}tatistics}, 46(5), 2018.

\end{thebibliography}

\end{document}